\documentclass[12pt]{amsart}
\usepackage{amssymb}
\usepackage{amsmath}
\usepackage{latexsym}
\usepackage{amsthm}
\usepackage{rotate}
\usepackage{graphicx}
\setcounter{page}{1} \makeatletter \textwidth 6.0in \textheight
9.0in \oddsidemargin 0.0in \evensidemargin 0.0in \topmargin -0.2in
\pagestyle{plain}
\def\thmhead@plain#1#2#3{%
 \thmname{#1}\thmnumber{\@ifnotempty{#1}{
 }#2}%
 \thmnote{ \the\thm@notefont(#3)}}
\let\thmhead\thmhead@plain
\def\swappedhead#1#2#3{%
 \thmnumber{#2}\thmname{\@ifnotempty{#2}{. }#1}%
 \thmnote{ \the\thm@notefont(#3)}}
\makeatother

\theoremstyle{definition} 
\newtheorem{definition}{Definition}[section]

\theoremstyle{plain}      
\newtheorem{proposition}[definition]{Proposition}
\newtheorem{theorem}[definition]{Theorem}

\newtheorem{lemma}[definition]{Lemma}


\begin{document}
\email{aa145@aub.edu.lb}

\keywords{De Bruijn Sequence, Ford sequence, preference function,
prefer-one algorithm, linear complexity}
\title{Spans of Preference Functions for De Bruijn Sequences}
\author[A. Alhakim]{Abbas Alhakim Department of Mathematics\\ American University of Beirut\\ Beirut, Lebanon}
\begin{abstract}
A nonbinary Ford sequence is a de~Bruijn sequence generated by
simple rules that determine the priorities of what symbols are to be
tried first, given an initial word of size $n$ which is the order of
the sequence being generated. This set of rules is generalized by
the concept of a preference function of span $n-1$, which gives the
priorities of what symbols to appear after a substring of size $n-1$
is encountered. In this paper we characterize preference functions
that generate full de~Bruijn sequences. More significantly, We
establish that any preference function that generates a de~Bruijn
sequence of order $n$ also generates de~Bruijn sequences of all
orders higher than $n$, thus making the Ford sequence no special
case. Consequently, we define the preference function complexity of
a de Bruijn sequence to be the least possible span of a preference
function that generates this de~Bruijn sequence.
\end{abstract}

\maketitle

\section{Introduction}
Given a positive integer $t>1$ and an alphabet
$A=\{0,1,\ldots,t-1\}$ of size $t$, a de Bruijn sequence of order
$n$ over the alphabet $A$ is a sequence of symbols such that every
pattern of size $n$ appears exactly once as a block of contiguous
symbols. For example, $00110$ and $0011221020$ are two de~Bruijn
sequences of order $2$ over the alphabets $\{0,1\}$ and $\{0,1,2\}$
respectively. The existence of these sequences for any finite size
alphabet and any order is a well known fact \cite{dB46}.

For the binary alphabet, a classical but rather curious algorithm
that generates a de Bruijn sequence for any order $n$ is called the
``prefer-one'' algorithm. It consists of the following simple steps.
Begin by writing $n$ zeros. Then for $k>n$, write a one for the
$k^{th}$ bit of the sequence if the newly formed $n$-tuple has not
previously appeared in the sequence, otherwise write a zero. This is
repeated, preferring one every step of the way, until neither
appending  one nor zero puts a new $n$-tuple, at which time the
algorithm halts.

The prefer-one sequence is traced back to Martin \cite{Martin1934}.
But it has been rediscovered by many authors, see Fredricksen
\cite{Fred1982} for an exposition.

The prefer-one algorithm is generalized to an alphabet of size $t>2$
by preferring a higher value over a lower value. That is, once the
initial $n$ zeros are written, the value $t-1$ is appended if the
word formed by the $n$ most recent symbols is new, otherwise  $t-2$
is proposed, otherwise $t-3$, etc. This sequence was proposed by
Ford \cite{Ford1957} and it therefore bears his name. We will refer
to this generalization as the ``prefer-higher'' algorithm.

In this paper, we show that preferring higher values is not
necessary to obtain full de~Bruijn sequences. In fact, a binary
algorithm similar to the prefer-one was recently proposed in
\cite{Alhakim10}. This algorithm is called the prefer opposite as it
proposes a bit that is opposite to the bit most recently appended to
the sequence. Although the prefer opposite sequence is not a
de~Bruijn sequence, it only misses the constant word $1^n$. In the
non-binary case, other preferences can be constructed that yield
full sequences. For example, each diagram in
Table~\ref{Ta:Pref_Diagrams} can generate a full de~Bruijn sequence
of arbitrary order $n$ that starts with the initial word $0^n$. Each
row in a diagram displays the digits to be proposed, in decreasing
priority, when the rightmost digit of the sequence being constructed
is the digit that appears on the left side of the arrow of that row.
A proposed digit is accepted if the most recently formed word of
size $n$ has not appeared earlier in the sequence, otherwise the
next digit in that row is proposed.

It is worth noticing here that the upper and lower left diagrams
give the same decreasing preference regardless of the previous
digit. Thus they display the prefer-higher rules for alphabet sizes
$3$ and $4$ respectively.

The following are respectively all the sequences of order $2$ that
are generated using the diagrams in Table~\ref{Ta:Pref_Diagrams}.

$00221201100$; $00110221200$; $00120221100$;

$003323130221201100$; $001320221103312300$; $003020132233121100$;

In the sequel it will be proven that these diagrams generate
de~Bruijn sequences of all orders. More generally, we will
characterize all such diagrams that produce full sequences.

\begin{table}
\small
\begin{tabular}{c|c|c}
\begin{tabular}{lllll}
0 & $\rightarrow$ & 2, & 1, & 0\\
1 & $\rightarrow$ & 2, & 1, & 0\\
2 & $\rightarrow$ & 2, & 1, & 0\\
\end{tabular}
&
\begin{tabular}{lllll}
0 & $\rightarrow$ & 1, & 2, & 0\\
1 & $\rightarrow$ & 1, & 0, & 2\\
2 & $\rightarrow$ & 2, & 1, & 0\\
\end{tabular}
&
\begin{tabular}{lllll}
0 & $\rightarrow$ & 1, & 2, & 0\\
1 & $\rightarrow$ & 2, & 1, & 0\\
2 & $\rightarrow$ & 0, & 2, & 1\\
\end{tabular}\\\hline
\begin{tabular}{llllll}
0 & $\rightarrow$ & 3, & 2, & 1, & 0\\
1 & $\rightarrow$ & 3, & 2, & 1, & 0\\
2 & $\rightarrow$ & 3, & 2, & 1, & 0\\
3 & $\rightarrow$ & 3, & 2, & 1, & 0\\
\end{tabular}
&
\begin{tabular}{llllll}
0 & $\rightarrow$ & 1, & 2, & 3, & 0\\
1 & $\rightarrow$ & 3, & 1, & 0, & 2\\
2 & $\rightarrow$ & 0, & 2, & 1, & 3\\
3 & $\rightarrow$ & 2, & 3, & 1, & 0\\
\end{tabular}
&
\begin{tabular}{llllll}
0 & $\rightarrow$ & 3, & 2, & 1, & 0\\
1 & $\rightarrow$ & 3, & 2, & 1, & 0\\
2 & $\rightarrow$ & 0, & 2, & 3, & 1\\
3 & $\rightarrow$ & 0, & 2, & 3, & 1\\
\end{tabular}
\\\hline
\end{tabular}
\caption{Some preference diagrams of de~Bruijn sequences with
alphabet sizes 3 and 4.}\label{Ta:Pref_Diagrams}
\end{table}

\section{Main Results}

The idea of generating a de~Bruijn sequence by making preferences is
formalized in the concept of preference functions, defined in Golomb
\cite{Golomb1967} who attributes it to Welsh. In any de~Bruijn
sequence of order $n$, a word of size $(n-1)$ appears exactly $t$
times. A preference function gives the priority list of what digits
is to come first, second, third, etc. after a word of size $n-1$
appears in the sequence. Here is a precise definition.

\begin{definition}
A preference function  $P$ of span $n-1$ is a $t$-dimensional vector
valued function of $n-1$ variables such that, for each choice of the
vector ${\bf{a}}=(a_1,\ldots,a_{n-1})$ from the set $A^{n-1}$, the
entries of the vector
$\left(P_1({\bf{a}}),\ldots,P_t({\bf{a}})\right)$ form a permutation
of the elements of $A$.
\end{definition}

\begin{definition}
Given a preference function $P$, the least preference function
induced by $P$ is a function $\emph{g}$ from $A^{n-1}$ to $A^{n-1}$
defined as
$$g(a_1,\ldots,a_{n-1})=(a_2,\ldots,a_{n-1},P_t(a_1,\ldots,a_{n-1})).$$
\end{definition}

The following process is given in Golomb \cite{Golomb1967} and it
shows how a preference function of span $s-1$ is used to construct
recursive periodic sequences of order $s$.

\begin{definition}\label{D:procedure}
For any word $(I_1,\ldots,I_n)$ and preference function $P$ of span
$n-1$, the following inductive definition determines a unique finite
sequence $\{a_i\}$:
    \begin{enumerate}
    \item[1.] $a_1=I_1,\ldots,a_n=I_n$.
    \item[2.] If $a_{N+1},\ldots,a_{N+n-1}$ have been defined, then
    $a_{N+n}=P_i(a_{N+1},\ldots,a_{N+n-1})$, where $i$ is the
    smallest integer such that the word
    $$(a_{N+1},\ldots,a_{N+n-1},P_i(a_{N+1},\ldots,a_{N+n-1})$$ has
    not previously appeared as a segment of the sequence (provided
    that there is such $i$).
    \item[3.] Let $L=L\{a_i\}$ be the first value of $N$ such that no
    $i$ can be found to satisfy item 2. Then $a_{L+n-1}$ is the last
    digit of the sequence and $L$ is called the cycle period.
    \end{enumerate}
\end{definition}

Conversely, we remark that any periodic sequence induces at least
one preference function whose corresponding sequence is the periodic
sequence itself. To see this, consider a periodic sequence $S$
started at the word $J_1,\ldots,J_n$, where $n$ is the smallest word
size such that every pattern of size $n$ occurs at most once in $S$.
Now consider all occurrences (if any) of a pattern $\bf{w}$ of size
$n-1$ in a single period of $S$. Since every pattern of size $n$
occurs at most once, the number of occurrences $r(\bf{w})$ of the
pattern $\bf{w}$ is bounded above by $t$. For $i=1$ to $r(\bf{w})$
let $P_i(\bf{w})$ be the digit that occurs right after the $i^{th}$
occurrence of $\bf{w}$. If $r(\bf{w})<t$ let
$P_{r(\bf{w})+1}(\bf{w}),\ldots, P_t(\bf{w})$ be any permutation of
the digits which do not appear as entries of
$(P_1(\bf{w}),\ldots,P_{r(\bf{w})})$.

By the above construction, it is evident that the preference
function $P$ along with the initial word $J_1,\ldots, J_n$ produces
the sequence $S$. The next proposition follows immediately by the
above discussion.

\begin{proposition}\label{P:Prefunc_dBEquiv}
Fixing an initial word $(I_1,\ldots, I_n)$, there is a one to one
correspondence between the set of de Bruijn sequences of order $n$
and the set of preference functions of span $n-1$ which generate
de~Bruijn sequences of order $n$ started at $(I_1,\ldots, I_n)$.
\end{proposition}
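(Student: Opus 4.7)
My plan is to make explicit the two maps sketched in the paragraphs immediately preceding the statement and verify that they are mutually inverse. Write $\mathcal{D}$ for the set of de~Bruijn sequences of order $n$ starting with the fixed word $(I_1,\ldots,I_n)$, and $\mathcal{P}$ for the set of preference functions of span $n-1$ which, via Definition~\ref{D:procedure} with initial word $(I_1,\ldots,I_n)$, produce a sequence lying in $\mathcal{D}$. Let $\Phi:\mathcal{P}\to\mathcal{D}$ be the map that runs Definition~\ref{D:procedure}, and let $\Psi:\mathcal{D}\to\mathcal{P}$ be the map that assigns to $S\in\mathcal{D}$ the preference function whose value $\Psi(S)_i(\mathbf{w})$ is the symbol following the $i$-th occurrence of $\mathbf{w}$ in $S$. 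The key observation is that in a full de~Bruijn sequence each $(n-1)$-word $\mathbf{w}$ occurs exactly $t$ times and its $t$ successor symbols exhaust $A$, so $\Psi(S)$ is already a genuine preference function with no arbitrary padding required.

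To verify $\Phi\circ\Psi=\mathrm{id}_{\mathcal{D}}$, I would proceed by induction on the position. Assuming the procedure driven by $\Psi(S)$ has reproduced $S$ through position $N+n-1$, the current sliding window is some $\mathbf{w}$, and by the induction hypothesis the successors of $\mathbf{w}$ at every previous occurrence already match $S$. By the definition of $\Psi$, the smallest index $i$ for which the proposed $n$-word has not yet been produced is precisely the next-occurrence index, so the appended symbol agrees with the symbol of $S$ at position $N+n$.

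The slightly more delicate direction is $\Psi\circ\Phi=\mathrm{id}_{\mathcal{P}}$. Given $P\in\mathcal{P}$ generating $S$, I need to show that on the $i$-th occurrence of an arbitrary $(n-1)$-word $\mathbf{w}$ the procedure appends $P_i(\mathbf{w})$, so that $\Psi$ recovers $P$ as an ordered tuple at every $\mathbf{w}$. The argument is induction on $i$: an $n$-word of the form $\mathbf{w}\cdot a$ can enter $S$ only at an occurrence of $\mathbf{w}$, so by the inductive hypothesis the set of used $n$-words of the form $\mathbf{w}\cdot a$ at the $i$-th occurrence is exactly $\{\mathbf{w}\cdot P_1(\mathbf{w}),\ldots,\mathbf{w}\cdot P_{i-1}(\mathbf{w})\}$; hence the smallest index whose proposed extension is new is $i$ itself, and the symbol appended is $P_i(\mathbf{w})$.

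I do not expect any genuine obstacle: the only point that requires care is the bookkeeping in the last paragraph, which works precisely because $S$ is a \emph{full} de~Bruijn sequence, so every preference value of $P$ is actually tested and recorded somewhere in $S$; in a non-full sequence the tail of each preference list could be invisible to $\Psi$ and the map would fail to be injective.
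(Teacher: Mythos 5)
Your proposal is correct and follows essentially the same route as the paper, which simply constructs the map $\Psi$ (read off the successor of each occurrence of every $(n-1)$-word) in the remark preceding the proposition and declares the correspondence immediate. You merely make explicit the verification that $\Phi$ and $\Psi$ are mutually inverse, including the point the paper leaves implicit—that fullness of $S$ ensures every preference value is realized, so no padding ambiguity arises.
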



Given an arbitrary preference function, a natural question is
whether or not this preference function generates a full de~Bruijn
sequence. In this section, we take on the problem of characterizing
such \textit{complete} preference functions.

For completeness, we now state two theorems, given in Golomb
\cite{Golomb1967}, which present conditions on a preference function
to produce a de~Bruijn sequence.

\begin{definition} For $0\leq r\leq n-1$, we say that $(x_1,\ldots,x_{n-1})$
has an $r$-overlap with $(I_1,\ldots,I_n)$ if
$(x_{n-r},\ldots,x_{n-1})=(I_1,\ldots,I_r)$. Notice that any
$(n-1)$-digit word at least has a zero overlap with
$(I_1,\ldots,I_n)$.
\end{definition}

\textbf{Theorem 16.} (of Golomb's Chapter VI) For any initial word
$(I_1,\ldots,I_{n})$, if $P$ is a preference function of span $n-1$
that satisfies $P_t(x_1,\ldots,x_{n-1})=I_{r+1}$, when $r$ is the
largest integer such that $(x_1,\ldots,x_{n-1})$ has an $r$-overlap
with $(I_1,\ldots,I_{n})$, then the sequence generated by
$(I_1,\ldots,I_{n})$ and $P$ has length $t^n$, i.e., it is a
de~Bruijn sequence of order $n$.

While the previous theorem states a condition that guarantees that a
preference function of span $n-1$ produces a de Bruijn sequence of
order $n$, the next theorem starts with a preference function that
is known to generate a de~Bruijn sequence of order $n-1$ and
provides a way to construct a preference function of span $n-1$ that
produces a de~Bruijn sequence f order $n$. This \textit{recursive
construction} is stated and proved for the binary case in Golomb
\cite{Golomb1967}, although it is claimed that the theorem can be
easily generalized to the non-binary case.

\textbf{Theorem 17.} (of Golomb's Chapter VI) The following
hypotheses are adopted:

1. Let $(I_1,\ldots, I_n)$ be an arbitrary initial word.

2. Let $P(x_1,\ldots,x_{n-2})=(P_1,P_2)$ be the preference function
for the binary de~Bruijn sequence of order $n-1$, $\{b_i\}$, and
initial word $(I_1,\ldots,I_{n-1})$, such that $P_1(I_1,\ldots,
I_{n-2})=1+I_{n-1} \mod 2$.

3. Let $x_1\oplus F(x_2,\ldots,x_{n-1})$ be the feedback formula for
$\{b_i\}$. That is, $b_{i} = b_{i-n+1}\oplus
F(x_{i-n+2},\ldots,x_{i-1})$ for all $i$

4. Let $P^{*}(x_1,\ldots, x_{n-1})=(P_1^*,P_2^*)$ which satisfies
$P_2^*=1\oplus P_1^*$ and
$$0=[P_1^*(x_1,\ldots,x_{n-1})\oplus1\oplus x_1\oplus F(x_2,\ldots,x_{n-1})]\times%
[x_1\oplus F(x_2,\ldots,x_{n-1})\oplus P_1(x_2,\ldots,x_{n-1})].$$

where the $\oplus$ is taken as addition modulo $2$. It follows from
these hypotheses that the sequence $\{a_i\}$, generated by
$(I_1,\ldots,I_n)$ and $P^*$, is a de~Bruijn sequence of order $n$.

While the conditions stated in Theorem~17 indeed generate a complete
binary preference table, these conditions are in a sense
artificially designed to make possible the inductive proof given in
Golomb \cite{Golomb1967}. In what follows we will show that, for any
alphabet size $t$, a preference function of span $n-1$ is itself
capable of generating de~Bruijn sequences of all orders larger than
or equal to $n$. Before we do this we  will characterize preference
functions of span $n-1$ that generate de~Bruijn sequences of order
$n$, i.e., complete preference functions.

A de~Bruijn sequence of order $n$ can be started with any of its
words of size $n$. Unless otherwise stated, in the rest of this
paper we will only be concerned with an initial word
$(I_1,\ldots,I_n)=0^n$, i.e. the constant string of $n$ zeros.

\begin{definition} Let $E$ be a finite set, let $f$ be a function from $E$ to
itself and let $l\geq1$ be an integer. By a cycle of length $l$
induced by $f$ we mean a sequence of elements $x_1,\ldots,x_l$ such
that $f(x_i)=x_{i+1}$ for $i=1$ to $l-1$ and $f(x_l)=x_1$.
\end{definition}

We now state our first main result.

\begin{theorem}\label{T:NoCycles}
Let $P$ be a complete preference function of span $n-1$ that
corresponds to a de~Bruijn sequence started at the string $0^n$.
Then the least preference function $g(x_1,\ldots,x_{n-1})$ has no
cycles of any length except the self-loop $(0^{n-1},0^{n-1})$, i.e.
$g(0^{n-1})=0^{n-1}$, which must be a cycle of $g$.
\end{theorem}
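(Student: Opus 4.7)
The plan is to exploit the role of $P_t$ as the least-preferred extension and to relate cycles of $g$ to the ``final visits'' carried out by the greedy procedure of Definition~\ref{D:procedure}.

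First, I would establish the self-loop $g(0^{n-1})=0^{n-1}$, i.e.\ $P_t(0^{n-1})=0$. Since the procedure begins with $0^n$, the $n$-window $0^n$ is declared seen before any preference is consulted, so at every subsequent occurrence of the $(n-1)$-window $0^{n-1}$ the extension by $0$ is rejected. As the procedure sweeps $P_1(0^{n-1}),P_2(0^{n-1}),\ldots$ from left to right and picks the first unseen extension, the index housing the symbol $0$ is skipped at each such occurrence. The $t-1$ procedural visits to $0^{n-1}$ (between the initial occurrence and the eventual halt) select preference indices forming a strictly increasing sequence in $\{1,\ldots,t\}$, and the one missing index is precisely the position holding $0$. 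The completeness of $P$, together with the fact that all $t-1$ non-zero extensions must actually be consumed, is to be invoked to pin this missing index to $t$, giving $P_t(0^{n-1})=0$.

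Second, assuming $g(0^{n-1})=0^{n-1}$, I would rule out any other cycle. Let ${\bf x}_1\to{\bf x}_2\to\cdots\to{\bf x}_l\to{\bf x}_1$ be a cycle of $g$ with ${\bf x}_1\neq 0^{n-1}$; since the cycle elements are distinct and $g(0^{n-1})=0^{n-1}$ by Step~1, none of the ${\bf x}_i$ equals $0^{n-1}$. By definition each transition ${\bf x}_i\to{\bf x}_{i+1}$ appends $P_t({\bf x}_i)$, and at any node ${\bf v}\neq 0^{n-1}$ (where no out-edge is pre-marked by the initial) the $t$ visits consume $P_1({\bf v}),\ldots,P_t({\bf v})$ in that order, so the $P_t$-edge is fired precisely at the last visit. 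Now take the cycle node whose last visit occurs latest in the generated sequence, say ${\bf x}_j$: after firing $P_t({\bf x}_j)$ the procedure arrives at ${\bf x}_{j+1}$, whose last visit has already occurred, so all $t$ out-edges at ${\bf x}_{j+1}$ are seen and the procedure halts there, at a vertex different from $0^{n-1}$. This contradicts the Eulerian-path observation that, when started at $0^n$, the procedure can only terminate at $0^{n-1}$.

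The hard part will be Step~1. The slogan ``the initial condition pushes $0$ to the bottom of the preference list at $0^{n-1}$'' is not immediate: a priori the symbol $0$ may occupy any slot of $P(0^{n-1})$ and the procedure merely skips it. Turning the bookkeeping (strictly increasing selected indices, $t-1$ extensions actually consumed, the permutation property of $P$) into the precise identity $P_t(0^{n-1})=0$ is where I expect essentially all of the substantive work to go; once this is in hand, Step~2 follows cleanly from the last-visit/halting argument above.
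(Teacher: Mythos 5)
Your Step 2 is correct and is a genuinely different route from the paper's. The paper fixes the cycle $(y_1,\ldots,y_{n-1})\to(y_2,\ldots,y_n)\to\cdots$ and iterates \emph{backward}: acceptance of the least-preferred extension $P_t(y_1,\ldots,y_{n-1})$ forces all $t$ predecessor words $(y,y_1,\ldots,y_{n-1})$ to occur earlier, and after $i$ trips around the cycle the word $(y_1,\ldots,y_n)$ is shown to precede itself. Your forward, extremal version (pick the cycle vertex whose $P_t$-edge fires last, and conclude the procedure halts at the next cycle vertex, which is not $0^{n-1}$) works equally well, at the cost of also needing the easy pigeonhole fact that the procedure can only terminate at $0^{n-1}$ --- which is exactly the paper's Lemma~\ref{L:EndOfSequence}, proved there for the converse theorem.

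The genuine gap is precisely where you suspect it: Step 1, and the bookkeeping you propose cannot be completed. The position of the symbol $0$ inside the list $P(0^{n-1})$ is invisible to the procedure: once the initial word $0^n$ is written, the extension of $0^{n-1}$ by $0$ is rejected at every subsequent visit no matter which slot it occupies, so the generated sequence --- and hence the completeness of $P$ --- is unchanged if $0$ is moved out of slot $t$. Concretely, for $t=2$, $n=2$, both $P(0)=(1,0)$ and $P(0)=(0,1)$ (with $P(1)=(1,0)$) generate the full sequence $00110$, and the second has $g(0)=1\neq 0$. So the identity $P_t(0^{n-1})=0$ does not follow from completeness plus the run of the algorithm; it is really part of the normalization built into the sequence-to-preference-function correspondence of Proposition~\ref{P:Prefunc_dBEquiv}, where every entry of $P(0^{n-1})$ is read off from the sequence itself (with the forced initial occurrence of $0^n$ accounting for the bottom slot). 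Be aware that the paper's own one-line justification of the self-loop --- that otherwise $0^n$ ``must have been proposed and accepted earlier,'' hence occurs twice --- glosses over the same point: the proposal of $0$ after $0^{n-1}$ is proposed and \emph{rejected} because of the initial word, so no second occurrence arises. Your instinct that essentially all the substantive work sits in Step 1 is right; the fix is to appeal to that normalization of $P$ at $0^{n-1}$ rather than to try to derive it from the increasing-index bookkeeping, which provably cannot distinguish the slot occupied by $0$.
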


\begin{proof}
Let $S$ be the de Bruijn sequence starting with $0^n$ and resulting
from $P$. If $(0^{n-1},0^{n-1})$ is not a cycle of $g$ then
$P_t(0^{n-1})=a\neq0$ and hence $0$ has a higher preference over
$a$, that is $P_i(0^{n-1})=0$ for some $i<t$. Since $S$ is a
de~Bruijn sequence of order $n$, the word $0^{n-1}a$ must be a
substring. This means that the word $(0^{n-1},P_i(0^{n-1})=0^n$ must
have been proposed and accepted earlier in the sequence. So that
$0^n$ occurs twice in the sequence, which is a contradiction.
Suppose now that $g$ has a cycle of length $i$, $1\leq i\leq
t^{n-1}-1$ other than the self loop at $0^{n-1}$. Namely, suppose
that for some $y_1,\ldots,y_{i+n-1}$
\begin{eqnarray}\label{D:cycle}
g(y_1,\ldots,y_{n-1})&=&(y_2,\ldots,y_{n})\\
g(y_2,\ldots,y_{n})&=&  (y_3,\ldots,y_{n+1})\notag\\\notag
                   &\vdots&\\\notag
g(y_{i},\ldots,y_{i+n-2})&=& (y_{i+1},\ldots,y_{i+n-1}).
\end{eqnarray}
where $(y_{i+1},\ldots,y_{i+n-1})=(y_1,\ldots,y_{n-1})$ but
$(y_j,\ldots,y_{j+n-2})\neq (y_k,\ldots,y_{k+n-2})$ for all pairs
(j,k) such that $1\leq j<k\leq i+1$ and $(j,k)\neq(1,i+1)$.

Since $S$ is a de Bruijn sequence of order $n$, $(y_1,\ldots, y_n)$
occurs in $S$. By definition of $g$ and the first equation in
Display~(\ref{D:cycle}), $y_n=P_t(y_1,\ldots,y_{n-1})$. It follows
that all the words $(y_1,\ldots,y_{n-1},z)$, $z\neq y_n$ must have
occurred earlier in the sequence. This implies that all the
predecessors $(y,y_1,\ldots,y_{n-1})$, for $y\in A$ have occurred
before $(y_1,\ldots,y_n)$. In particular
$(y_i,\ldots,y_{i+n-1})=(y_i,y_1,\ldots,y_{n-1})$ has occurred
before $(y_1,\ldots,y_n)$.

Now, $g(y_i,\ldots,y_{i+n-2})=(y_{i+1},\ldots,y_{i+n-1})$ is
equivalent to $P_t(y_i,\ldots,y_{i+n-2})=y_{i+n-1}$. Using the above
argument, we see that all the words $(y_i,\ldots,y_{i+n-2},z)$,
$z\neq y_{i+n-1}$, and therefore all their predecessors
$(y,y_i,\ldots,y_{i+n-2})$ must have occurred earlier in the
sequence. In particular, $(y_{i-1},y_i,\ldots,y_{i+n-2})$ occurs
before $(y_i,\ldots,y_{i+n-1})$, which was just shown to occur
before $(y_1,\ldots,y_n)$. Repeating the same reasoning a total of
$i$ times, we see that $(y_1,\ldots,y_n)$ must have occurred earlier
in the sequence $S$. That is, $(y_1,\ldots,y_{n})$ occurs more than
once in $S$, contradicting the assumption that the latter is a
de~Bruijn sequence. This establishes that $g$ has no cycles besides
$(0^{n-1},0^{n-1})$.
\end{proof}

It is important to remark here that in the above theorem, the
initial word must be $0^n$ or--of course--a constant string $i^{n}$
(in which case the self loop of $g$ must be $(i^{n-1},i^{n-1})$).
Indeed, Table~\ref{Ta:counterexLOOP} displays a de~Bruijn sequence
of alphabet size $3$ and order $3$ with its corresponding preference
function. Noting that the initial word is $001$, we can see that the
induced least preference function $g$ has the cycle $(00,01,10,00)$.

\begin{table}[h]
\small
\begin{tabular}{lcl|lcl|lcl}
00 & $\rightarrow$ & 0,2,1 & 10 & $\rightarrow$ & 1,2,0 & 20 &
$\rightarrow$ & 1,0,2\\
01 & $\rightarrow$ & 1,2,0 & 11 & $\rightarrow$ & 0,1,2 & 21 &
$\rightarrow$ & 2,1,0\\
02 & $\rightarrow$ & 0,1,2 & 12 & $\rightarrow$ & 1,2,0 & 22 &
$\rightarrow$ & 2,0,1\\
\hline
\end{tabular}
\caption{Preference rules for the sequence
$00110121222010200021112022100$.}\label{Ta:counterexLOOP}
\end{table}\label{T:counterexLOOP}



The converse of Theorem~\ref{T:NoCycles} is also true. That is, if a
given preference function of span $n-1$ induces a least preference
function $g$ that has no cycles except the self loop at $0^{n-1}$,
then the preference function produces a de~Bruijn cycle of order $n$
started at the word $0^n$. However, the next result is much stronger
than this converse. We state it after the following algorithm.

\textbf{Algorithm P}

Input: Two integers $s>1$ and $n\geq s$ and a preference function
$P$ of span $s-1$.

Output: a unique de~Bruijn sequence $S=\{a_i\}$ of order $n$.
\begin{enumerate}
    \item[1.] $a_1=0,\ldots, a_n=0$.
    \item[2.] If $a_{N+1},\ldots,a_{N+n-1}$ have been defined, then
    $a_{N+n}=P_i(a_{N+n-s+1},\ldots,a_{N+n-1})$, where $i$ is the
    smallest integer between $1$ and $t$ such that the word
    $$(a_{N+1},\ldots,a_{N+n-1},P_i(a_{N+n-s+1},\ldots,a_{N+n-1})$$ has
    not previously appeared as a segment of the sequence (provided
    that there are such $i$).
    \item[3.] Let $L=L\{a_i\}$ be the smallest value of $N$ such that no
    $i$ can be found to satisfy the condition in (2). Then $a_{L+n-1}$ is the last
    digit of the sequence and $L$ is called the cycle period.
    \end{enumerate}

\begin{theorem}\label{T:main_converse}
Let $P$ be a preference function of span $s-1$ that induces a least
preference function $g$ which admits no cycles except the self loop
$(0^{s-1},0^{s-1})$. Then for any integer $n\geq s$ the sequence
given by Algorithm~P is a de~Bruijn sequence of order $n$.
\end{theorem}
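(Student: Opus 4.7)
The plan is to reduce Theorem~\ref{T:main_converse} to the converse of Theorem~\ref{T:NoCycles}, which is asserted earlier in the paper. First I would observe that running Algorithm~P with a preference function $P$ of span $s-1$ at order $n$ is identical to applying the procedure of Definition~\ref{D:procedure} with the \emph{padded} preference function $\tilde{P}$ of span $n-1$ defined by
\[
\tilde{P}(x_1,\ldots,x_{n-1}) \;=\; P(x_{n-s+1},\ldots,x_{n-1}),
\]
i.e.\ the first $n-s$ arguments are ignored. Indeed, at every step both processes propose the same digit and carry out the same ``has the $n$-window occurred before?'' test, so the two generated sequences coincide.

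By the converse of Theorem~\ref{T:NoCycles} it then suffices to show that the least preference function
\[
g_n(x_1,\ldots,x_{n-1}) \;=\; \bigl(x_2,\ldots,x_{n-1},\, P_t(x_{n-s+1},\ldots,x_{n-1})\bigr)
\]
induced by $\tilde{P}$ has no cycle on $A^{n-1}$ other than the self-loop at $0^{n-1}$.

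The main step is to translate any hypothetical cycle of $g_n$ into a cycle of $g$ on the smaller space $A^{s-1}$. Suppose $v_1\to v_2\to\cdots\to v_k\to v_1$ is such a cycle. Its successive states overlap in the natural shift-by-one way, so concatenation produces a periodic sequence $\{c_i\}$ (of period dividing $k$) with $v_j=(c_j,c_{j+1},\ldots,c_{j+n-2})$ and
\[
c_{i+s-1} \;=\; P_t(c_i,c_{i+1},\ldots,c_{i+s-2})\qquad\text{for every }i.
\]
Setting $u_i=(c_i,\ldots,c_{i+s-2})\in A^{s-1}$, this displayed recursion is exactly $g(u_i)=u_{i+1}$, so the $u_i$ themselves form a cycle of $g$. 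By hypothesis that cycle must be the self-loop at $0^{s-1}$, which forces $c_i\equiv 0$, and hence $v_j=0^{n-1}$ for every $j$. Therefore the only cycle of $g_n$ is the self-loop at $0^{n-1}$.

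The real obstacle, such as it is, is bookkeeping rather than ideas: one must verify carefully that padding the span-$(s-1)$ preference function up to span $n-1$ truly reproduces Algorithm~P, and check the index arithmetic in the recursion $c_{i+s-1}=P_t(c_i,\ldots,c_{i+s-2})$. Once the setup is in place, the contraction of a cycle of $g_n$ down to a cycle of $g$ via the last-$(s-1)$-window map is immediate, and the theorem follows by invoking the converse of Theorem~\ref{T:NoCycles} on the padded function.
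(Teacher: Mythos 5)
Your reduction has a genuine circularity problem. The ``converse of Theorem~\ref{T:NoCycles}'' that you invoke is only \emph{asserted} in the paper, never proved prior to Theorem~\ref{T:main_converse}; it is precisely the special case $s=n$ of the theorem you are trying to prove, and in the paper's logic it is a \emph{consequence} of Theorem~\ref{T:main_converse}, not an available ingredient. Your padding construction $\tilde{P}(x_1,\ldots,x_{n-1})=P(x_{n-s+1},\ldots,x_{n-1})$ correctly identifies Algorithm~P with the procedure of Definition~\ref{D:procedure} applied to a span-$(n-1)$ function, and your contraction of a cycle of $g_n$ to a cycle of $g$ via the last-$(s-1)$-window map is also correct (a periodic orbit of $g_n$ forces a periodic orbit of $g$, hence the self-loop at $0^{s-1}$, hence all $c_i=0$). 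But all this accomplishes is to reduce the general case $s\leq n$ to the full-span case $s=n$ --- and the full-span case is where the entire difficulty lives. You have supplied no argument for it.

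What is missing is exactly the substantive content of the paper's proof: (i) a pigeonhole argument showing the generated sequence can only terminate after a word of the form $a0^{n-1}$ (Lemma~\ref{L:EndOfSequence}); (ii) the deduction, using $P_t(0^{s-1})=0$ from the self-loop hypothesis, that every word $b0^{n-1}$ actually occurs (Lemma~\ref{L:AllZeroPredecessors}); (iii) the key propagation lemma that if $(x_1,\ldots,x_n)$ is missing from $S$ then so is $(x_2,\ldots,x_n,P_t(x_{n-s+2},\ldots,x_n))$ (Lemma~\ref{L:NextNotAppear}); and (iv) the argument that iterating (iii) from any hypothetical missing word must, because $g$ has no cycles other than the self-loop, drive the trailing $(s-1)$-window to $0^{s-1}$ within $t^{s-1}$ steps and from there produce a missing word of the form $b0^{n-1}$, contradicting (ii). None of this appears in your proposal. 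Your reduction would be a legitimate way to organize a proof only if you first proved the $s=n$ case by some such argument; as written, the proposal assumes the hard part.
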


We observe that this theorem establishes that the Ford sequence is
rather the norm than the exception. For the Ford sequence, the
permutation $(t-1,t-2,\ldots,0)$--which is a de~Bruijn sequence of
order $1$--generates de~Bruijn sequences of all orders. Using
Theorem~\ref{T:main_converse}, given any de~Bruijn sequence of order
$s$, we can construct the corresponding preference function of span
$s-1$ which in turn can generate a unique de~Bruijn sequence of any
order higher than $s$. The proof of this theorem will be given after
a few lemmas are formulated and proved. %

\begin{lemma}\label{L:EndOfSequence}
The sequence $S$ in Theorem~\ref{T:main_converse} ends just after
the word $a0^{n-1}$ is encountered, for some $a\in A$, $a\neq0$.
\end{lemma}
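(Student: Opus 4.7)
The approach is to use an elementary flow/counting identity in the order-$(n-1)$ de~Bruijn graph. Write the output of Algorithm~P as $a_1,\ldots,a_{L+n-1}$ and let $\mathbf{w}=(a_{L+1},\ldots,a_{L+n-1})$ denote the terminal $(n-1)$-suffix; the plan is to show first that $\mathbf{w}=0^{n-1}$, and then that $a_L\neq 0$.

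The first step translates the halting condition at $N=L$. Since $(P_1(\mathbf{u}),\ldots,P_t(\mathbf{u}))$ is a permutation of $A$, where $\mathbf{u}$ denotes the $(s-1)$-suffix of $\mathbf{w}$, the failure of step~2 for every $i$ means that every $n$-word of the form $(\mathbf{w},x)$, $x\in A$, has already appeared in $S$. Because no $n$-word can be appended twice, $\mathbf{w}$ appears exactly $t$ times as the prefix of an $n$-word in $S$.

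Next I would set up the counting identity. For each $(n-1)$-word $\mathbf{v}$, let $p(\mathbf{v})$ and $s(\mathbf{v})$ count the occurrences of $\mathbf{v}$ as the prefix, respectively the suffix, of an $n$-word of $S$. Counting the $L+1$ $(n-1)$-windows of $S$ in two ways (as prefixes of $n$-words plus the terminal window, and as suffixes of $n$-words plus the initial window $0^{n-1}$) yields
\[
p(\mathbf{v})-s(\mathbf{v})=\delta_{\mathbf{v},0^{n-1}}-\delta_{\mathbf{v},\mathbf{w}}.
\]
If $\mathbf{w}\neq 0^{n-1}$, taking $\mathbf{v}=\mathbf{w}$ gives $s(\mathbf{w})=p(\mathbf{w})+1=t+1$, which is impossible since at most $t$ distinct $n$-words can end in $\mathbf{w}$. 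Hence $\mathbf{w}=0^{n-1}$.

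Finally, I need to check $a_L\neq 0$. The last $n$-word of $S$ is $(a_L,0^{n-1})$, and if $a_L=0$ it would coincide with the initial word $0^n$, forcing $L=1$ since $n$-words do not repeat. But the self-loop hypothesis $g(0^{s-1})=0^{s-1}$ gives $P_t(0^{s-1})=0$, so $P_1(0^{s-1})$ is nonzero and produces a valid extension at step $N=1$; hence $L\geq 2$ and $a_L\neq 0$. The only real subtlety is keeping the $(n-1)$-window and $(s-1)$-window bookkeeping separate; once the prefix/suffix balance is written down, the rest of the argument is immediate.
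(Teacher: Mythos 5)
Your proof is correct and takes essentially the same approach as the paper: both arguments show that if the terminal $(n-1)$-window were not $0^{n-1}$ it would occur $t+1$ times while admitting at most $t$ distinct predecessors, forcing a repeated $n$-word; your prefix/suffix balance identity is just a more systematic phrasing of the paper's pigeonhole on predecessors. You also verify $a\neq0$ explicitly, a small point the paper leaves implicit.
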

\begin{proof}
First, it is immediate by the construction in \textbf{Algorithm P}
that a word of size $n$ occurs at most once in the constructed
sequence. Suppose now that the algorithm terminates just after the
word $(x_1,\ldots,x_n)\neq a0^{n-1}$ is realized. That is,
$(x_2,\ldots,x_n,y)$ must have appeared earlier in the sequence for
all $y\in A$. This implies that $(x_2,\ldots,x_n)$ appeared $t+1$
times. Since $(x_2,\ldots,x_n)$ is not equal to $0^{n-1}$, it is not
the initial block of the sequence so that every time it appeared it
was preceded by something. The pigeon hole principle thus implies
that there exists an element $z\in A$ such that $(z,x_2,\ldots,x_n)$
occurs twice in the sequence, which is a contradiction.
\end{proof}

\begin{lemma}\label{L:AllZeroPredecessors}
All words of the form $b0^{n-1}$ occur in the sequence $S$ of
Theorem~\ref{T:main_converse}.
\end{lemma}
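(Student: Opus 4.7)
The plan is a short counting argument applied to the length-$(n-1)$ window $0^{n-1}$. By Lemma~\ref{L:EndOfSequence} the algorithm halts immediately after a word $a0^{n-1}$ with $a\neq 0$ is realized, so the terminal sliding window of size $n-1$ is $0^{n-1}$ and, since the algorithm could not extend it, every possible continuation $(0^{n-1},y)$ with $y\in A$ has already appeared as a length-$n$ subword of $S$.

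I would then count occurrences of $0^{n-1}$ in $S$ in two ways. Each occurrence at a non-terminal position is followed by a single symbol and therefore produces a length-$n$ subword of the form $0^{n-1}y$; length-$n$ words do not repeat in $S$ (an immediate consequence of Algorithm~P), and all $t$ such words occur, so there are exactly $t$ non-terminal occurrences. Adding the terminal one, $0^{n-1}$ appears exactly $t+1$ times in $S$. On the other hand, every occurrence of $0^{n-1}$ other than the very first (the initial window $a_1\ldots a_{n-1}=0^{n-1}$) is preceded by some digit $b\in A$ and so contributes a distinct length-$n$ word $b0^{n-1}$. This gives $t$ distinct words of the form $b0^{n-1}$, but there are only $t$ such words in total, so each $b0^{n-1}$ with $b\in A$ must occur in $S$.

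I do not anticipate a substantive obstacle; the only subtle point is that the initial occurrence of $0^{n-1}$ at position $1$ and the terminal occurrence must be genuinely distinct, i.e.\ the sequence must contain more than just the initial block. This is exactly what the clause $a\neq 0$ in Lemma~\ref{L:EndOfSequence} gives: if the two extremal occurrences coincided, the terminal length-$n$ word would be $0^n$, contradicting the fact that $0^n$ was already consumed as the initial block.
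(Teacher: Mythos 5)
Your proof is correct and takes essentially the same route as the paper: invoke Lemma~\ref{L:EndOfSequence} to conclude that the terminal window is $0^{n-1}$ and hence that all $t$ words $0^{n-1}y$ already occur, then pass to the predecessors of the occurrences of $0^{n-1}$. You merely make explicit the counting/pigeonhole step that the paper compresses into its final ``It follows that\ldots'' sentence, which is a reasonable bit of added detail rather than a different argument.
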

\begin{proof}
By Lemma~\ref{L:EndOfSequence} the sequence ends with the word
$a0^{n-1}$. The word $0^{n}=0^{n-1}0$, which already occurs in the
beginning, can not be appended after $a0^{n-1}$. Since
$P_t(0^{n-1})=0$, no other symbol $z$ can be appended either. This
implies that all words of the form $0^{n-1}z$ have occurred earlier
in the sequence. It follows that all the words of the form
$b0^{n-1}$, where $b\neq0$  must occur in the sequence.
\end{proof}

\begin{lemma}\label{L:NextNotAppear}
If $X_1=(x_1,\ldots,x_n)$ is a word that does not occur in $S$ then
neither does the word $X_2=(x_2,\ldots,x_n,c)$, where
$c=P_t(x_{n-s+2},\ldots,x_n)$.
\end{lemma}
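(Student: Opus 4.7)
My plan is to argue by contrapositive: assume $X_2=(x_2,\ldots,x_n,c)$ occurs in $S$ and show $X_1=(x_1,\ldots,x_n)$ must occur as well. The guiding idea is that $c=P_t(x_{n-s+2},\ldots,x_n)$ is the \emph{least}-preferred continuation of $(x_{n-s+2},\ldots,x_n)$, so step~(2) of Algorithm~P only writes $c$ once every higher preference has been exhausted.

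Concretely, at the step of Algorithm~P where $c$ is appended to produce $X_2$, the chosen index is $t$, which forces each of the $t-1$ words $(x_2,\ldots,x_n,P_i(x_{n-s+2},\ldots,x_n))$ with $i<t$ to have already appeared earlier in $S$. Together with $X_2$ itself, and using that $(P_1,\ldots,P_t)$ is a permutation of $A$, this shows every $n$-word of the form $(x_2,\ldots,x_n,z)$ with $z\in A$ appears in $S$. In particular the $(n-1)$-word $(x_2,\ldots,x_n)$ has at least $t$ occurrences in $S$.

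I would then split on whether $(x_2,\ldots,x_n)$ equals $0^{n-1}$. If it does, then $X_1=(x_1,0^{n-1})$ is either $0^n$ (the initial block of $S$) or of the form $b0^{n-1}$ with $b\neq 0$; in both cases $X_1$ occurs in $S$, the latter by Lemma~\ref{L:AllZeroPredecessors}. Otherwise $(x_2,\ldots,x_n)$ is not the initial $(n-1)$-block of $S$, so each of its $t$ occurrences is preceded by some symbol. This produces $t$ predecessor $n$-words $(y,x_2,\ldots,x_n)$ which are pairwise distinct, since Algorithm~P never repeats an $n$-word; as only $t$ values of $y\in A$ are available, every such predecessor occurs in $S$, and $X_1$ in particular.

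The main obstacle I anticipate is precisely the boundary case $(x_2,\ldots,x_n)=0^{n-1}$, in which one occurrence of the $(n-1)$-block sits at the very beginning of $S$ and has no predecessor; the pigeonhole count on predecessors fails there, and this is exactly what Lemma~\ref{L:AllZeroPredecessors} is invoked to bypass. Apart from that, the proof is a clean combination of pigeonhole with the defining property of $P_t$ as the last-resort preference.
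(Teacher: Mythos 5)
Your proof is correct and follows essentially the same route as the paper's: the least-preference property of $P_t$ forces all $t$ extensions $(x_2,\ldots,x_n,z)$ to have occurred, and a pigeonhole count on the predecessors $(y,x_2,\ldots,x_n)$ then yields $X_1$. Your explicit treatment of the boundary case $(x_2,\ldots,x_n)=0^{n-1}$ via Lemma~\ref{L:AllZeroPredecessors} is a bit more careful than the paper, which simply asserts $X_2\neq 0^n$, but it is a refinement of the same argument rather than a different approach.
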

\begin{proof}
Suppose that $X_2$ occurs in $S$. $X_2$ can not be the zero string
$0^n$ because the latter occurs as the first string. Hence $X_2$ is
preceded by some string. Since $c=P_t(x_{n-s+2},\ldots,x_n)$ has the
least preference, it follows that all the words $(x_2,\ldots,x_n,z)$
must have occurred earlier in the sequence. Therefore the set of all
predecessors $(y,x_2,\ldots,x_n)$ must have occurred for all values
of $y$. In particular, $X_1=(x_1,\ldots,x_n)$ must have occurred,
which is a contradiction.
\end{proof}

\begin{proof} (of Theorem~\ref{T:main_converse})
Suppose that $(x_1,\ldots,x_n)$ is a pattern that does not appear in
$S$.

Case 1. Let us first suppose that $(x_{n-s+2},\ldots,x_n)=0^{s-1}$.
Since $(x_1,\ldots,x_n)$ can not be all zeros, there must exist an
integer $i$, $1\leq i\leq n-s+1$ such that $x_i\neq0$ but $x_j=0$
for all $j$, $i+1\leq j\leq n$. Since $g(0^{s-1})=0^{s-1}$ it is
clear that $P_t(0^{s-1})=0$. It follows by applying
Lemma~\ref{L:NextNotAppear} that $(x_2,\ldots,x_i,0^{n-i+1})$ does
not occur in $S$. By the same argument, applying
Lemma~\ref{L:NextNotAppear} another $(i-2)$ times, we see that the
word $x_i,0^{n-1}$ does not occur. This contradicts
Lemma~\ref{L:AllZeroPredecessors}.

Case 2. Suppose now that $(x_{n-s+2},\ldots,x_n)\neq0^{s-1}$. Then
by Lemma~\ref{L:NextNotAppear} the word $(x_{2},\ldots,x_n,x_{n+1})$
does not appear either, where $x_{n+1}=P_t(x_{n-s+2},\ldots,x_n)$.
Moreover, $(x_{n-s+2},\ldots,x_n)\neq (x_{n-s+3},\ldots,x_{n+1})$
for otherwise the least preference function $g$ would have a cycle
of length $1$ that is distinct from the self loop
$(0^{s-1},0^{s-1})$, namely
\[
(x_{n-s+1},\ldots,x_n, g(x_{n-s+1},\ldots,x_n))=
(x_{n-s+2},\ldots,x_{n+1}),
\]
which can not be the case by the given.

If $(x_{n-s+3},\ldots,x_{n+1})=0^{s-1}$, Case~1 above leads to a
contradiction. So it is safe to assume that this is not the case. We
claim that, by applying this argument repeatedly, we  eventually get
a word ending with $0^{s-1}$ that does not occur in $S$. To see this
note that, after $i$ repetitions of
Lemma~\ref{L:NextNotAppear}--with $i\leq t^{s-1}$, we conclude that
the word $(x_{n+i-s+2},\ldots, x_{n+i})$ does not occur in $S$,
where for $j=n+1$ to $n+i$, $x_j=P_t(x_{j-s+1},\ldots,x_{j-1})$ and
$(x_{j-s+2},\ldots, x_j)\neq0^{s-1}$. Since
$g(x_{j-s+1},\ldots,x_{j-1})=(x_{j-s+2},\ldots,x_j)$ and since $g$
has no cycles of any length (namely, no cycles of length
$1,2,\ldots,t^{s-1}-1$) other than the self loop at $0^{s-1}$, we
see that $(x_{j-s+1},\ldots,x_j)\neq
(x_{j^{\prime}-s+2},\ldots,x_{j^{\prime}})$ for all $j<j^{\prime}$
and $n\leq j,j^{\prime}\leq n+i$. Otherwise, the sequence
\[
(x_{j-s+1},\ldots,x_j),(x_{j-s+2},\ldots,x_{j+1}),
\cdots,(x_{j^{\prime}-s+1},\ldots,x_{j^{\prime}})
\]
would form a cycle of length $j^{\prime}-j$.

For any $i$ such that $1\leq i\leq t^{s-1}-2$, if the right tail
$(x_{n+i-s+2},\ldots,x_{n+i})=0^{s-1}$ then applying Case~1 leads to
a contradiction. Suppose then that the right tail is distinct from
$0^{s-1}$ for all $i=1$ to $t^{s-1}-2$.  Then, for $i=t^{s-1}-1$,
the facts that all the words are distinct and that there are
$i+1=t^{s-1}$ words imply that the last word of size $s-1$ is
necessarily equal to $0^{s-1}$, thus leading to a contradiction, by
Case~1. This establishes the theorem.
\end{proof}

\section{Preference Function Complexity}

In the vast literature on de~Bruijn sequences, there has been more
than one method to classify these sequences. One well known
criterion for binary de~Bruijn sequences is the number of ones in
the truth table of the corresponding feedback function, (namely, the
function $F$ defined in the statement of item (3) in Theorem~17
above).

Also, de~Bruijn sequences have been classified according to their
linear complexity, which is defined as the minimal span of a linear
shift register that generates the de~Bruijn sequence. In other
words, it is the minimal integer $N$ such that there exists a linear
feedback function $F=F(x_2,\ldots,x_N)$ that can generate the
de~Bruijn sequence.

It was proven by Chan, Games and Key \cite{ChanGamesKey1982} that
the linear complexity of a binary de Bruijn sequence of order $n$ is
between $2^{n-1}+n$ and $2^n-1$.

In this section,  we use Theorem~\ref{T:main_converse} to introduce
a new notion of complexity of de~Bruijn sequences  of any alphabet
size that relates to the preference function which generates the
sequence. We thus obtain another classification of de~Bruijn
sequences based on this complexity.

To fix ideas, we observe that, by Theorem~\ref{T:main_converse}, it
is clear that an algorithm such as the one that generates the Ford
sequence (for general alphabet size $t$) is rather the norm than the
exception. For the latter algorithm, a preference function that
generates a de Bruijn sequence of order $1$ also generates a
de~Bruijn sequence of any order $n$ larger than $1$ when started
with the initial word $0^n$.

Let us also observe that the preference function of the Ford
sequence--with the ``prefer-higher'' algorithm--is a constant
function of span $0$ that is given by $P(x)=(t-1,t-2,\ldots,0)$ for
all $x$ in $A$. See Table~\ref{Ta:Pref_Diagrams}. The corresponding
least preference function is given by $g(x)\equiv0$, which admits
the only cycle $g(0)=0$. In fact, the two preference diagrams in the
leftmost column of Table~\ref{Ta:Pref_Diagrams} have span zero while
the remaining diagrams have span one.

\begin{definition}
Given an order $n$ de~Bruijn sequence $S$ that starts with the fixed
word $\textbf{0}^n$ we define the preference function complexity
$comp_{\textbf{0}}(S)$ as the smallest integer $s$, $0\leq s\leq n$
such that there exists a preference function of span $s$ that
generates the sequence $S$ with the initial word $\textbf{0}^n$.
\end{definition}

The sixteen binary de~Bruijn sequences of order four are given in
Table~\ref{Ta:dB4Sequences} while their corresponding preference
functions are given in Table~\ref{t:prefdB4Seq}. Notice that
sequence (3)--the Ford sequence--does not depend on any of the
previous three bits so it has preference function span $0$ while
sequence (6) depends only on the previous two bits so it has span 2.
all other sequences have preference function span 3. Thus they have
full span. There are no sequences with span 1, due to the binary
alphabet. Sequence (2) comes close. In fact, changing the preference
of '111' to 1 then 0 makes the preference function depend only on
the previous bit but this introduces a self loop $111\rightarrow111$
in the corresponding least preference function so the resulting
sequence misses the word $1111$. Note that this sequence is the
prefer opposite sequence mentioned earlier.

\begin{proposition}
The distribution of de~Bruijn sequences of order $n$ according to
their preference function complexity is given by $N_0(n)=(t-1)!$,
$N_1(n)=((t-1)!)^t\cdot t^{t-2}$, and for $i>1$
$N_i(n)=((t-1)!)^{t^i}\cdot t^{t^i-i-1}- ((t-1)!)^{t^{i-1}}\cdot
t^{t^{i-1}-i}$ where, for $i=0$ to $n-1$,
$$N_i(n)=card\{S: S \textup{ is a
de Bruijn sequence of order n such that } comp(S)=i\}.$$
\end{proposition}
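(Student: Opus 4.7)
The strategy is to set up a chain of bijections reducing $N_i(n)$ to a classical count. Let $\mathcal{F}_i$ denote the set of span-$i$ preference functions whose induced least preference function $g$ has only the self-loop $(0^i,0^i)$ as a cycle. By Theorem~\ref{T:NoCycles} (and its converse, which follows from Theorem~\ref{T:main_converse} applied at $n=i+1$) together with Proposition~\ref{P:Prefunc_dBEquiv}, the set $\mathcal{F}_i$ is in bijection with the order-$(i+1)$ de~Bruijn sequences starting at $0^{i+1}$. Separately, Theorem~\ref{T:main_converse} applied at any $n\geq i+1$ sends each $P\in\mathcal{F}_i$ to a unique order-$n$ de~Bruijn sequence starting at $0^n$, and its image is precisely the set of order-$n$ sequences of complexity at most $i$.

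To see that this second map is injective I would recover $P$ from the generated sequence $S$ directly. Fix $w\in A^i$ and a prefix $v\in A^{n-i-1}$ chosen so that $(v,w)\neq 0^{n-1}$ (such a $v$ exists for any $w$ when $n>i+1$; the boundary case $n=i+1$ is handled directly by Proposition~\ref{P:Prefunc_dBEquiv}). Then the $(n-1)$-block $(v,w)$ appears exactly $t$ times in $S$, each followed by an algorithm-chosen character. By induction on $k$, the next character after the $k$-th occurrence must equal $P_k(w)$, because the characters previously appended in this context are precisely $P_1(w),\ldots,P_{k-1}(w)$, so Algorithm~P skips exactly these and accepts $P_k(w)$. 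Thus $P$ is recoverable from $S$, and the map is injective.

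Combining the two bijections, $|\mathcal{F}_i|$ equals both the number of order-$n$ de~Bruijn sequences of complexity at most $i$ and the classical count of order-$(i+1)$ de~Bruijn sequences starting at $0^{i+1}$, namely
\[
\frac{(t!)^{t^i}}{t^{i+1}}=((t-1)!)^{t^i}\cdot t^{t^i-i-1}.
\]
By subtraction one obtains $N_0(n)=(t-1)!$ from the $i=0$ case directly, and for $i\geq 1$,
\[
N_i(n)=((t-1)!)^{t^i}\cdot t^{t^i-i-1}-((t-1)!)^{t^{i-1}}\cdot t^{t^{i-1}-i}.
\]
The main obstacle is the injectivity argument in the previous paragraph: although each $P_k(w)$ is determined by a single appropriate $(v,w)$ context, care is needed for the case $w=0^i$ because the $(n-1)$-block $0^{n-1}$ arises both from the initial word and from the terminal tail of $S$, a subtlety that disappears once $v$ is chosen distinct from $0^{n-i-1}$. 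Beyond that, the proof essentially packages together Proposition~\ref{P:Prefunc_dBEquiv}, Theorem~\ref{T:main_converse}, and the classical enumeration of de~Bruijn sequences.
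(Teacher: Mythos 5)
Your proposal follows essentially the same route as the paper's proof: identify the order-$n$ sequences of complexity at most $i$ with the complete preference functions of span $i$, hence (via Proposition~\ref{P:Prefunc_dBEquiv}, Theorem~\ref{T:NoCycles} and Theorem~\ref{T:main_converse}) with the order-$(i+1)$ de~Bruijn sequences, invoke the classical count $M(t,i+1)=((t-1)!)^{t^i}t^{t^i-i-1}$, and subtract consecutive values. The paper compresses all of this into the assertion that $N_i(n)=N_i(i)$ (it means $N_i(i+1)$) and never addresses whether distinct span-$i$ preference functions can yield the same order-$n$ sequence; your recovery of $P_k(w)$ as the symbol following the $k$-th occurrence of a context $(v,w)\neq 0^{n-1}$ is correct and is a genuine addition that the count actually needs.

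Two caveats. First, your derivation gives $N_1(n)=((t-1)!)^t t^{t-2}-(t-1)!$, whereas the statement asserts $N_1(n)=((t-1)!)^t t^{t-2}$ with no subtraction. Yours is the correct value: for $t=2$ it gives $N_1(4)=0$, in agreement with the paper's own table of the sixteen binary order-$4$ sequences (none has span $1$), while the stated formula gives $1$; moreover only with the subtraction do the $N_i(n)$ telescope to the total $M(t,n)$. So you have in effect corrected the statement rather than proved it verbatim, and this discrepancy should be flagged explicitly. Second, the one step you assert without argument---that the image of $\mathcal{F}_i$ under Algorithm~P is \emph{all} of the complexity-$\le i$ sequences---is also the step the paper leaves unproved. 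A sequence of complexity $\le i$ is generated by some span-$s$ function with $s\le i$; promoting it to a span-$i$ function generating the same sequence is easy, but one must still check that the promoted function lies in $\mathcal{F}_i$, i.e.\ has no cycle other than the self-loop. That is Theorem~\ref{T:NoCycles} for a span-$i$ function generating a de~Bruijn sequence of order $n>i+1$, which is not what that theorem states. Its proof does adapt (a nontrivial cycle of $g$, extended periodically to $(n-1)$-windows none of which is $0^{n-1}$, forces each associated $n$-word to be preceded by all of its predecessors, and chasing this around the cycle makes a word occur before itself), but the adaptation needs to be written down; as it stands this direction is a gap in both your argument and the paper's.
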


\begin{proof}
For $i=0$ the order of preference does not depend on any of the
previous digits, in particular it does not depend on the immediately
previous digit. Since the only allowed cycle in the induced least
preference function $g$ of  is the self loop from $0$ to $0$ it
follows that $g(i)=0$ for all digits $i$. The remaining $t-1$ digits
can be given any of $(t-1)!$ orders of preference.

For $i\geq1$, it is evident that the complexity of a de~Bruijn
sequence of order $i$ does not exceed $i-1$. Moreover, it is well
known, see \cite{FlieSainteMarie1894}, that the total number of
de~Bruijn sequences of order $i$ is given by the formula
$M(t,i)=[(t-1)!]^{t^{i-1}}\cdot t^{t^{i-1}-i}$. Hence $N_1(1)$ is
$M(t,1)$ minus the number of sequences of complexity $0$.

Similarly, $N_i(i)$ is $M(t,i)$ minus the number of sequences whose
complexity is less than $i$. Since Theorem~\ref{T:main_converse}
implies that every preference function of span $i<n$ also produces a
de~Bruijn sequence of order $n$, it follows that $N_i(n)=N_i(i)$.
\end{proof}

We will say that two sequences $\{a_i\}$ and $\{b_i\}$ are
equivalent if $b_i=\sigma(a_i)$ for some permutation $\sigma$ of the
alphabet $A$. Our last result relates to de~Bruijn sequences with
complexity zero. Notice that while the binary case allows only one
preference function with zero span, higher values of $t$ yield
$(t-1)!$ cases. The following proposition shows that in fact all of
these cases yield equivalent de~Bruijn sequences.

\begin{proposition}
All de Bruijn sequences of preference function complexity zero are
equivalent, up to a permutation of the digits, to the Ford sequence.
\end{proposition}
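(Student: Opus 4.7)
The plan is to first pin down the very restrictive form a span-zero preference function must take in order to generate a de~Bruijn sequence, and then to conjugate by a suitable alphabet permutation so as to bring that preference function onto the Ford preference function.

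First, I would invoke Theorem~\ref{T:NoCycles} to observe that the induced least preference function must have the self-loop at $0$, so that $P_t(0^{n-1})=0$; since $P$ has span zero it is constant, and this forces $P_t\equiv 0$. Hence any complete span-zero preference function takes the form $P=(p_1,\ldots,p_{t-1},0)$, where $(p_1,\ldots,p_{t-1})$ is some permutation of $\{1,\ldots,t-1\}$. Let $S$ be the de~Bruijn sequence that $P$ generates from the initial word $0^n$.

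I would then introduce the alphabet permutation $\sigma\colon A\to A$ defined by $\sigma(0)=0$ and $\sigma(p_i)=t-i$ for $1\le i\le t-1$. This is a bijection because both $(p_1,\ldots,p_{t-1})$ and $(t-1,t-2,\ldots,1)$ exhaust $\{1,\ldots,t-1\}$. Writing $\tilde S$ for the sequence obtained by applying $\sigma$ entrywise to $S$, the aim is to show that $\tilde S$ coincides with the order-$n$ Ford sequence.

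The key step is to verify that $\tilde S$ is the output of Algorithm~P when run with the constant preference function $Q=(\sigma(p_1),\ldots,\sigma(p_{t-1}),\sigma(0))=(t-1,t-2,\ldots,1,0)$ and initial word $\sigma(0^n)=0^n$. Because $\sigma$ is a bijection, an $n$-word $w$ has appeared earlier as a segment of $S$ if and only if $\sigma(w)$ has appeared as the corresponding segment of $\tilde S$; consequently, at every step the index $i$ selected by Algorithm~P is the same whether one runs $P$ on the history of $S$ or $Q$ on the history of $\tilde S$, and the appended symbols are $\sigma$-conjugate throughout. By the deterministic nature of Algorithm~P this identifies $\tilde S$ with the sequence produced by $Q$ from $0^n$, which is by definition the prefer-higher Ford sequence of order $n$.

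I do not anticipate any genuine obstacle. The only point requiring a bit of care is the history-preservation argument, and the reason it goes through so cleanly is precisely that $P$ is constant (span zero), so conjugation by $\sigma$ reduces to applying $\sigma$ to the list of proposed symbols with no relabelling of the argument of $P$. Once this is in place, the identification of $\tilde S$ with the Ford sequence is immediate.
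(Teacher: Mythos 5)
Your proof is correct and follows essentially the same route as the paper's: both identify the alphabet permutation $\sigma$ carrying the constant preference list onto the prefer-higher list $(t-1,\ldots,1,0)$ and show that applying $\sigma$ (respectively $\sigma^{-1}$) entrywise to the generated sequence yields the Ford sequence. If anything, your version is slightly more careful, since you explicitly invoke Theorem~\ref{T:NoCycles} to force $P_t\equiv 0$ (hence $\sigma(0)=0$, so the initial word $0^n$ is preserved under the relabelling) and you verify the conjugation by stepping through Algorithm~P, two points the paper's proof leaves implicit.
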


\begin{proof}
Let $Q$ be an arbitrary complete preference function of span zero.
Evidently, there exists a permutation $\sigma$ such that
$Q_i=(\sigma(t-1),\ldots,\sigma(0))$ for all $i\in A$. Let $\{b_i\}$
be the sequence of order $n$---started at $0^n$---that corresponds
to $Q$. Consider now the sequence $\{a_i\}$ defined by
$a_i=\sigma^{-1}(b_i)$, which is obviously a de~Bruijn sequence. We
claim that $\{a_i\}$ is the Ford sequence of order $n$. To see this,
let $i_1<i_2<\ldots<i_t$ be the positions of a pattern
$x_1,\ldots,x_{n-2}$ in the sequence $\{a_i\}$. It follows that
$i_1,\ldots,i_t$ are the positions of the pattern
$\sigma(x_1),\ldots,\sigma(x_{n-2})$ in the sequence $\{b_i\}$. By
definition of $Q$, the substrings
$$(b_{i_1},\ldots,b_{i_1+n-2}),\ldots,(b_{i_t},\ldots,b_{i_t+n-2})$$
are followed respectively by $\sigma(t-1),\ldots, \sigma(0)$.
Therefore, the substrings
$$(a_{i_1},\ldots,a_{i_1+n-2}),\ldots,(a_{i_t},\ldots,a_{i_t+n-2})$$
of $\{a_i\}$ are followed by $t-1,t-2,\ldots,0$.

Since this is true for any pattern $(x_1,\ldots,x_{n-1})$, the proof
is complete.
\end{proof}

\begin{table}
\small
\begin{tabular}{l|l||l|l}
\hline
1 &  0000100110101111000 & 9 &  0000101111001101000\\
2 &  0000101001101111000 & 10 & 0000101111010011000\\
3 &  0000111101100101000 & 11 & 0000101100111101000\\
4 &  0000111101011001000 & 12 & 0000110010111101000\\
5 &  0000100111101011000 & 13 & 0000111101001011000\\
6 &  0000101001111011000 & 14 & 0000110100101111000\\
7 &  0000110111100101000 & 15 & 0000101101001111000\\
8 &  0000110101111001000 & 16 & 0000111100101101000\\
\hline
\end{tabular}
\caption{Binary de~Bruijn sequences of order
4.}\label{Ta:dB4Sequences}
\end{table}

\begin{table}
\small
\begin{tabular}{l|l|l|l|l|l|l|l|l|l|l|l|l|l|l|l|l}
\hline
     & \multicolumn{16}{c}{sequence}\\
\cline{2-17}
     & 1 & 2 & 3 & 4 &  5 & 6 & 7 & 8 & 9 & 10 & 11 & 12 & 13 & 14 & 15 & 16\\
\hline%
000 & 1,0 &  1,0 & 1,0 & 1,0 & 1,0 & 1,0 & 1,0 & 1,0
& 1,0 & 1,0 & 1,0 & 1,0 & 1,0 & 1,0 & 1,0 & 1,0\\%
001 & 0,1 & 0,1 & 1,0 & 1,0 & 0,1 & 0,1 & 1,0 & 1,0
& 0,1 & 0,1 & 0,1 & 1,0 & 1,0 & 1,0 & 0,1 & 1,0\\%
010 & 0,1 & 1,0 & 1,0 & 1,0 & 0,1 & 1,0 & 1,0 & 1,0
& 1,0 & 1,0 & 1,0 & 1,0 & 0,1 & 0,1 & 1,0 & 1,0\\%
011 & 0,1 & 0,1 & 1,0 & 1,0 & 1,0 & 1,0 & 0,1 & 1,0
& 1,0 & 1,0 & 0,1 & 0,1 & 1,0 & 0,1 & 0,1 & 1,0\\%
100 & 1,0 &  1,0 & 1,0 & 1,0 & 1,0 & 1,0 & 1,0 & 1,0 & 1,0 & 1,0 &
1,0 & 1,0 & 1,0 & 1,0 & 1,0 & 1,0\\
101 & 0,1 & 0,1 & 1,0 & 0,1 & 0,1 & 0,1 & 1,0 & 0,1
& 1,0 & 1,0 & 1,0 & 1,0 & 0,1 & 0,1 & 1,0 & 1,0\\%
110 & 1,0 & 1,0 & 1,0 & 1,0 & 1,0 & 1,0 & 0,1 & 1,0
& 0,1 & 1,0 & 0,1 & 0,1 & 1,0 & 1,0 & 1,0 & 0,1\\
111 & 1,0 &  1,0 & 1,0 & 1,0 & 1,0 & 1,0 & 1,0 & 1,0 & 1,0 & 1,0 & 1
,0 & 1,0 & 1,0 & 1,0 & 1,0 & 1,0\\\hline%

span  & \multicolumn{1}{c|}{3} & \multicolumn{1}{c|}{3} &
\multicolumn{1}{c|}{0} & \multicolumn{1}{c|}{3} &
\multicolumn{1}{c|}{3} & \multicolumn{1}{c|}{2} &
\multicolumn{1}{c|}{3} & \multicolumn{1}{c|}{3} &
\multicolumn{1}{c|}{3} & \multicolumn{1}{c|}{3} &
\multicolumn{1}{c|}{3} & \multicolumn{1}{c|}{3} &
\multicolumn{1}{c|}{3} & \multicolumn{1}{c|}{3} &
\multicolumn{1}{c|}{3} & \multicolumn{1}{c}{3}\\
\hline
\end{tabular}
\caption{preference functions of order 4 binary de~Bruijn sequences
and their spans.}\label{t:prefdB4Seq}
\end{table}




\end{document}